\documentclass[11pt]{amsart}

\usepackage{amssymb, amsmath}

\newtheorem{theorem}{Theorem}[section]

\newtheorem{corollary}[theorem]{Corollary}

\theoremstyle{definition}
\newtheorem{definition}[theorem]{Definition}

\theoremstyle{remark}
\newtheorem{remark}[theorem]{Remark}

\title{\bf H-Covering Maps in Descriptive Function Theory
}

\author{Alexey Ostrovsky}

\subjclass[2010]{Primary 54E50, 54C10, 26A21; Secondary 54H05.}

\keywords{Descriptive Topological Analysis (DTA), completeness preservation,
stable maps, $H$-covering maps, countable compacta, mapping theory}

\begin{document}

\begin{abstract}

We study the preservation of complete metrizability under maps between metric spaces.
Two main approaches have been developed in this area: one based on the behavior of maps on countable compact sets, and another based on abstract conditions such as stability. In earlier work the author introduced the class of $H$-covering maps and proved that every such map is stable.

The main result of this paper is the converse: a map is $H$-covering if and only if it is stable. This provides a concrete and practical description of stable maps and offers a unified framework for completeness-preserving mappings.
\end{abstract}

\maketitle

\section*{Introduction}
Throughout the paper all spaces are metric, and the reader may think of them as 
subspaces of the real line or of the unit interval $\mathbf{I}=[0,1]$. 
All maps are assumed to be continuous and onto.

The study of completeness preservation under continuous maps goes back to the classical Lavrentiev--Luzin theorem on extending homeomorphisms; see~\cite{Ost} for a broader historical account. Over time two principal lines of research on completeness-preserving maps have developed.

The first line focuses on the behavior of maps on countable compact sets. Its main representatives include the (countable)-compact or $s$-covering maps, harmonic maps, and several related classes. These maps have a concrete geometric nature and are defined in terms of well-understood countable compact sets.

The second line consists of more abstract generalizations intended to describe completeness preservation in structural terms. The most prominent examples are the stable maps and the triquotient maps. Their definitions are abstract and technically involved, and their geometric meaning is less transparent.

A natural question was whether the abstract classes from the second line admit concrete equivalents among the classes of the first line. Several pairwise comparisons were attempted --- for instance, in~\cite{Pr} ---but none of them produced equivalences; instead, they generated new open problems.

In earlier work the author introduced the class of $H$-covering maps, defined inductively using homeomorphisms onto suitable countable compact sets. This gives them a simple and workable structure. Every $H$-covering map is stable, but the converse implication remained open.

The purpose of this paper is to prove that a map is $H$-covering if and only if it is stable. This equivalence shows that stability, originally defined in abstract terms, admits a fully concrete characterization via countable compact sets.

\section{Stability}

\begin{definition}
A map $f : X \to Y$ is called \emph{stable} if to each point $y \in Y$
one can assign a nonempty family $\eta_y$ of subsets of $X$ such that
every $U \in \eta_y$ satisfies $U \cap f^{-1}(y) \neq \emptyset$, and the
following stability condition holds:

For every $U \in \eta_y$ and every open set $V \supset U \cap f^{-1}(y)$,
there exists a neighborhood $O(y)$ of $y$ such that
$V \in \eta_w$ for every $w \in O(y)$.
\end{definition}

Let $y \in Y$ and let $x \in f^{-1}(y)$. By the definition of stability,
the set $X \setminus \{x\}$ belongs to $\eta_w$ for all points $w$
sufficiently close to $y$. Consequently, if $y$ is a non-isolated point,
then there exist points $w$ for which $\eta_w$ contains elements
different from the whole space $X$.

An equivalent definition of stable maps was given by E.~Michael \cite{Mich}.

\begin{remark}
\leavevmode

\begin{enumerate}
\item[\textbf{(I)}]
For every $y \in Y$ we have $X \in \eta_y$.
Indeed, take any $U \in \eta_y$. Since $X \supset U \cap f^{-1}(y)$,
the stability condition  implies a neighborhood $O(y)$ such that
$X \in \eta_w$ for all $w \in O(y)$.

\item[\textbf{(II)}]
For every subset $Y_0 \subset Y$, the restriction

\[
g = f|_{f^{-1}(Y_0)}
\]

is a stable map. A suitable stable family is given by

\[
\eta^{g}_{y} = \{\, U \cap f^{-1}(Y_0) : U \in \eta_y \,\}.
\]

\item[\textbf{(III)}]
Let $U$ be an open subset of $X$ such that $U \supset f^{-1}(y)$.
Then there exists a neighborhood $W(y)$ of $y$, chosen arbitrarily small,
such that

\[
g = f|_{U \cap f^{-1}(W(y))}
\]

is a stable map onto $W(y)$. \begin{proof}
Since $U \supset X \cap f^{-1}(y)$, the stability condition yields a neighborhood
$O(y)$ such that $U \in \eta_w$ for all $w \in O(y)$.
By \textbf{(II)}, the restriction $f|{f^{-1}(O(y))}$ is stable, and hence so is its
further restriction to any open neighborhood $W(y) \subset O(y)$.
\end{proof}
\end{enumerate}
\end{remark}
\section{Canonical compact sets $S_k(y)$}

 Let $N^{+}$ denote the set of positive integers.

Define in $\mathbf{I} = [0,1]$ the sets

\[
\mathbf{D}_0 = \{0\}, \qquad 
\mathbf{D}_1 = \{1/2^{n_1} : n_1 \in N^{+}\},
\]

\[
\mathbf{D}_2 = \{1/2^{n_1+n_2} : n_1,n_2 \in N^{+}\},
\]

and in general

\[
\mathbf{D}_k = \{1/2^{n_1+\cdots+n_k} : n_1,\dots,n_k \in N^{+}\}.
\]

For each $k \in N^{+}$ define the canonical compact

\[
\mathbf{S}_k = \bigcup_{i=0}^k \mathbf{D}_i.
\]

The compact sets $S_k(y)$ and $S_k^{*}(y)$ considered below in a space $Y$
(for finite $k$) are the standard compact sets homeomorphic to $\mathbf{S}_k$.
For infinite $k$ they will be defined later.  
We assume throughout that $Y$ has no isolated points.

For each point $y \in Y$ fix a sequence

\[
D_y = \{y_i : i \in \mathbb{N}^{+}\}
\]

homeomorphic to $\mathbf{D}_1$, consisting of pairwise distinct points
converging to $y$.

Choose pairwise disjoint neighborhoods $O(y_i)$ with 
$\operatorname{diam}(O(y_i)) < 1/i$ and $y \notin O(y_i)$.

For any infinite subsequence

\[
D_y^{*} = \{y_{i_k} : k \in \mathbb{N}^{+},\ i_1 < i_2 < \cdots\}
\]

define the canonical compacta of rank $1$:

\[
S_1(y) = \{y\} \cup D_y, \qquad 
S_1^{*}(y) = \{y\} \cup D_y^{*}.
\]

\section{Definition of $H_\alpha$-covering maps}

Assume that the canonical compact $S_\beta(y)$ and $S_\beta^{*}(y)$ 
are defined for all $\beta < \alpha$.

\subsection*{Successor step: $\alpha = \beta + 1$}

For each $y_i \in D_y$ choose canonical compact

\[
S_\beta(y_i) \subset O(y_i), \qquad 
S_\beta^{*}(y_i) \subset S_\beta(y_i),
\]

and define

\[
S_{\beta+1}(y) = \{y\} \cup \bigcup_{y_i \in D_y} S_\beta(y_i),
\]

\[
S_{\beta+1}^{*}(y) = \{y\} \cup 
\bigcup_{y_{i_k} \in D_y^{*}} S_\beta^{*}(y_{i_k}).
\]

\subsection*{Limit step}

Let $\{\beta_i\}$ be an increasing sequence with $\beta_i < \alpha$.
Define

\[
S_\alpha(y) = \{y\} \cup \bigcup_{y_i \in D_y} S_{\beta_i}(y_i),
\]

\[
S_\alpha^{*}(y) = \{y\} \cup 
\bigcup_{y_{i_k} \in D_y^{*}} S_{\beta_{i_k}}^{*}(y_{i_k}).
\]

\begin{definition}
A map $f : X \to Y$ is called \emph{$H_\alpha$-covering} if for every 
canonical compact $S_\alpha(y)$ there exist $S_\alpha^{*}(y)$ and a set 
$K \subset X$ such that the restriction

\[
f|_K : K \to S_\alpha^{*}(y)
\]

is a homeomorphism.
\end{definition}

\begin{definition}
A map is called \emph{$H$-covering} if it is $H_\alpha$-covering for every 
$\alpha < \omega_1$.
\end{definition}

\section{Main Theorem}

\begin{theorem}
Every stable map $f : X \to Y$ is $H$-covering.
\end{theorem}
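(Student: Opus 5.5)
We prove by transfinite induction on $\alpha<\omega_1$ a statement slightly stronger than $H_\alpha$-covering, so that it can be passed to subspaces: \emph{for every stable map $f:X\to Y$, every $y\in Y$, every canonical compact $S_\alpha(y)$, every $U\in\eta_y$ and every open $V\supset U\cap f^{-1}(y)$, there are $S^*_\alpha(y)$ and a compact $K\subset V$ with $f|_K:K\to S^*_\alpha(y)$ a homeomorphism.} Taking $U=V=X$, which is allowed by Remark (I), gives the theorem. Localizing to $V$ is what lets the pieces built over different $y_i$ be kept apart and forced to accumulate at a single point of $f^{-1}(y)$.

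The base case $\alpha=1$ shows the mechanism. Fix $U\in\eta_y$ and $x\in U\cap f^{-1}(y)$. Take open balls $V_m=B(x,1/m)\cap V$; each contains $U\cap f^{-1}(y)$? No: it only contains $x$, so apply stability to $U$ and the open set $V$ once, and then use the sets $\eta_w$ themselves to shrink. Concretely, for $w$ near $y$ we get $V\in\eta_w$, so $V\cap f^{-1}(w)\neq\emptyset$. To force convergence to one point we need a nested choice. Fix a sequence $x_m\to$? Instead, argue as follows. Choose points $x_k\in V\cap f^{-1}(y_{i_k})$ with $i_k$ increasing, ensuring that the $x_k$ have a cluster point in $f^{-1}(y)$. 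To get this, use the open sets $V_m=V\setminus\overline{\{x_1,\dots,x_{m}\}}$ and a diagonal argument over a countable base. This is the delicate point, since $f^{-1}(y)$ need not be compact.

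The cleaner route is Michael's equivalent form of stability \cite{Mich}. I expect it to say that $y_n\to y$ implies the existence of a subsequence and points $x_k\in f^{-1}(y_{n_k})$ converging to some $x\in U\cap f^{-1}(y)$, with $x$ in a prescribed $V$. Then $K=\{x\}\cup\{x_k\}$ maps homeomorphically onto $S^*_1(y)$: the map is a bijection of convergent sequences together with their limits, the $y_{n_k}$ are distinct, and $K$ is compact.

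For the successor step $\alpha=\beta+1$, obtain the subsequence $y_{i_k}\to y$ and points $x_k\to x$ as above, with $x_k\in V$. Then shrink: choose open $W_k\ni x_k$ with $\operatorname{diam}W_k<1/k$ and $\overline{W_k}\subset V$. By Remark (III), applied to the stable map $f$ restricted to $f^{-1}(O(y_{i_k}))$ (Remark (II)), $f|_{W_k\cap f^{-1}(O(y_{i_k}))}$ is stable near $y_{i_k}$ with $W_k\in\eta_{y_{i_k}}$. If necessary, shrink $O(y_{i_k})$ so that it lies inside the neighbourhood given by (III), and choose $S_\beta(y_{i_k})$ inside it; the construction allows this freedom. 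The induction hypothesis then gives compacts $K_k\subset W_k$ with $f|_{K_k}\cong S^*_\beta(y_{i_k})$. Put $K=\{x\}\cup\bigcup_k K_k$. Because $\operatorname{diam}W_k\to0$ and $x_k\to x$, the set $K$ is compact. The map $f|_K$ is injective, since the $O(y_{i_k})$ are pairwise disjoint and avoid $y$. A continuous bijection from a compact space onto $S^*_{\beta+1}(y)$ is a homeomorphism. The limit step is identical, using $\beta_{i_k}$ on the $k$-th piece.

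The main obstacle is the sequence-lifting lemma: from stability, produce a subsequence $y_{i_k}$ and lifts $x_k\in f^{-1}(y_{i_k})\cap V$ that converge in $X$. I would try the following. Enumerate a countable base $\{B_j\}$ of $X$. Since $U\cap f^{-1}(y)$ is nonempty, for shrinking open neighbourhoods $G_m$ of a fixed $x\in U\cap f^{-1}(y)$ we need $G_m\in\eta_w$ for $w$ near $y$. Stability only gives this when $G_m\supset U\cap f^{-1}(y)$. So the lemma may have to be proved by contradiction, applying the stability condition to the open set $V$ minus a closed discrete set of candidate points, in a compactness-type argument. This is exactly where completeness-type or Michael-type reformulations are needed, and I would rely on \cite{Mich} for it.
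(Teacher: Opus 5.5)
The step you flag as the main obstacle is actually easy. It is exactly how the paper proves $Z_1(y)\neq\emptyset$. Suppose $U\cap f^{-1}(y)$ missed $C=\operatorname{cl}_X\bigl(V\cap\bigcup_i f^{-1}(y_i)\bigr)$. Then $V\setminus C$ would be an open set containing $U\cap f^{-1}(y)$, so it would belong to $\eta_{y_i}$ for all large $i$ and would meet $f^{-1}(y_i)$, which is absurd. A point of $U\cap f^{-1}(y)\cap C$ is then a limit of lifts lying in $V$, because $X$ is metric. No compactness of fibres and no appeal to \cite{Mich} is needed. Leaving this step as a guess about Michael's theorem is a hole in your write-up, but the genuine gap lies elsewhere.

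In the successor step you assert that $W_k\in\eta_{y_{i_k}}$, so that the inductive hypothesis can be used inside the small ball $W_k$ around the lift $x_k$. Remark (III) gives nothing of the sort. It needs an open set containing $A\cap f^{-1}(y_{i_k})$ for some $A\in\eta_{y_{i_k}}$, for instance the whole fibre. Moreover, $W_k\in\eta_{y_{i_k}}$ would force $W_k$ to meet $f^{-1}(w)$ for every $w$ near $y_{i_k}$.

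Concretely, let $Y=\mathbf{I}$, $y=0$, $y_i=1/2^i$, $X=(\mathbf{I}\times\{0\})\cup\{(y_i,1/i):i\in N^{+}\}\subset\mathbb{R}^2$, and let $f$ be the first projection. Then $X$ is compact, so $f$ is closed and hence stable. The points $x_k=(y_{i_k},1/i_k)\to(0,0)$ are a legitimate output of your lifting lemma. But every $x_k$ is isolated in $X$, so no copy of $S^*_1(y_{i_k})$ lies near $x_k$, and your rank-$2$ compact cannot be assembled. Your final assembly (compactness of $K$, injectivity via the disjoint $O(y_{i_k})$, a continuous bijection from a compactum) is fine once small $K_k$ at the $x_k$ exist; the problem is that they need not exist.

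The missing idea is that the lifts must be chosen among \emph{good} points, those admitting arbitrarily small rank-$\beta$ compacts over $y_i$. One then has to prove that good points over the $y_i$ accumulate on $f^{-1}(y)$. The paper does this with the recursively defined sets $Z_\alpha(y)=f^{-1}(y)\cap\operatorname{cl}_X\bigl(\bigcup_i Z_{\beta_i}(y_i)\bigr)$. Non-emptiness comes from the same closure argument with $\bigcup_i Z_{\beta_i}(y_i)$ in place of $\bigcup_i f^{-1}(y_i)$. There, the inductive hypothesis is applied to the stable restriction of $f$ to the complement of that closure, and it yields a rank-$\beta_i$ compact whose point over $y_i$ would have to lie in $Z_{\beta_i}(y_i)$. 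The compact $K$ is then built from compacts of shrinking diameter attached at points of $Z_{\beta_{i_k}}(y_{i_k})$ converging to a point of $Z_\alpha(y)$.

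Your localized hypothesis adapts well to this. Induct on the statement $V\cap Z_\alpha(y)\neq\emptyset$, and at $y_i$ take $U'=V'=V\setminus\operatorname{cl}_X\bigl(\bigcup_i Z_{\beta_i}(y_i)\bigr)$. Note also that you may not choose $S_\beta(y_{i_k})$ inside a prescribed neighbourhood, since $H_\alpha$-covering quantifies over every $S_\alpha(y)$; you may only discard finitely many branches.
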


\begin{proof}  
We prove by transfinite induction on $\alpha$ with 
$1 \le \alpha < \omega_1$ that every stable map is 
$H_\alpha$-covering.

\subsection*{Step 1: The case $\alpha = 1$}

Consider the canonical compact

\[
S_1(y) = \{y\} \cup D_y = \{y\} \cup \{y_i : i \in \mathbb{N}\},
\]

and define

\[
Z_0(y_i) = f^{-1}(y_i), \qquad
Z_1(y) = f^{-1}(y) \cap 
\operatorname{cl}_X\Big(\bigcup_i Z_0(y_i)\Big).
\]

We first show that $Z_1(y) \neq \emptyset$.
Assume the contrary and consider the open set

\[
U = X \setminus 
\operatorname{cl}_X\Big(\bigcup_i Z_0(y_i)\Big).
\]

Then $U \supset f^{-1}(y)$.
By Remark~1.2(III), there exists a neighborhood $W(y)$  such that

\[
g = f|_{U \cap f^{-1}(W(y))}
\]

is a stable map onto $W(y)$.

Since $y_i \to y$, choose $y_i \in W(y)$.
Then $g^{-1}(y_i) \subset U$, 
contradicting the definition of $U$.
Thus $Z_1(y) \neq \emptyset$.

Let $x \in Z_1(y)$.
Every neighborhood of $x$ meets infinitely many sets $Z_0(y_{i_k})$ in  points  $x_{i_k} \to x$.
Then the compact set $K = \{x\}  \cup \{x_{i_k}:i_k \in N^+ \}\subset X$  is well defined.  Denote  $S^{*}(y)  = f(K)$. Then

\[
f|_{K} : K \to  S^{*}(y)  
\]

is a homeomorphism. 

\subsection*{Inductive step}

Assume that for every $\beta<\alpha$ every stable map is an $H_\beta$-covering.

Let $S_\alpha(y)$ be given, and define

\[
Z_\alpha(y) = f^{-1}(y) \cap 
\operatorname{cl}_X\Big(\bigcup_{y_i \in D_y} Z_{\beta_i}(y_i)\Big).
\]

We show that $Z_\alpha(y) \neq \emptyset$.
Assume the contrary and consider

\[
U = X \setminus 
\operatorname{cl}_X\Big(\bigcup_{y_i \in D_y} Z_{\beta_i}(y_i)\Big).
\]

Then $U \supset f^{-1}(y)$.
By Remark~1.2(III), there exists a neighborhood $W(y)$ such that

\[
g = f|_{U \cap f^{-1}(W(y))}
\]

is a stable map onto $W(y)$.

Moreover, by the same remark, there exist infinitely many pairwise disjoint sets 
$W(y_i) \subset W(y)$ such that each

\[
g_i = g|_{U \cap g^{-1}(W(y_i))}
\]

is stable.

By the inductive hypothesis,  there exists a compact

\[
K_i \subset g^{-1}(W(y_i)) \subset U
\]

such that

\[
g_i|_{K_i} : K_i \to S_{\beta_i}^{*}(y_i)
\]

is a homeomorphism. 
But then $K_i$ must intersect  $Z_{\beta_i}(y_i)$,
contradicting $K_i \subset U$.
Thus $Z_\alpha(y)\neq \emptyset$.

Every neighborhood of every point $x\in Z_\alpha(y)\cap U$ meets infinitely many sets $Z_{\beta_{i_k}}(y_{i_k})$  in points $x_{i_k} \to x$. This follows from the definition of $Z_\alpha(y)$ and the fact that $x$ lies in the closure of  $\cup Z_{\beta_i}(y_i)$.  Let us consider the above-mentioned  homeomorphisms

\[
g_i|_{K_i} : K_i\to S^*_{\beta_i}(y_i)
\]

It is easy to see that we may choose the sets $K_{i_k}$ so that

\[
\operatorname{diam}(K_{i_k}) \to 0.
\]

Then the compact set

\[
K = \{x\} \cup \bigcup_k K_{i_k}
\]

is well defined, and the restriction

\[
f|_K : K \to S^{*}_{\alpha}(y)
\]

is a homeomorphism.

This completes the induction.
\end{proof}

\section{Consequences}

\begin{corollary}
A map $f : X \to Y$   is  $H$-covering  if and only if it is stable.
\end{corollary}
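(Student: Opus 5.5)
The corollary has two directions, and the plan is to treat them separately. The implication ``stable $\Rightarrow$ $H$-covering'' is exactly the Main Theorem just proved, so nothing new is needed there. For the converse, ``$H$-covering $\Rightarrow$ stable'', the Introduction says this was established in the author's earlier work, where $H$-covering maps were introduced. The cleanest route is to cite that result and combine the two. Still, I would sketch a direct argument so the equivalence is self-contained.

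\emph{The stable family.} For the converse, given an $H$-covering map $f$, I would define for each $y\in Y$ the family $\eta_y$ of all sets $U\subset X$ satisfying the following condition: for every $\alpha<\omega_1$ and every canonical compact $S_\alpha(y)$, there are $S^*_\alpha(y)$ and a compact $K\subset X$ with $f|_K:K\to S^*_\alpha(y)$ a homeomorphism and $K\cap f^{-1}(y)\subset U$. By Remark~1.2(I)-type reasoning, $X\in\eta_y$ because $f$ is $H$-covering, so $\eta_y$ is nonempty. Each member meets $f^{-1}(y)$, since the point of $K$ over $y$ lies in $U$.

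\emph{Verifying the stability condition.} Let $U\in\eta_y$ and let $V\supset U\cap f^{-1}(y)$ be open. I would argue by contradiction. Suppose there are points $w_i\to y$ with $V\notin\eta_{w_i}$. Then for each $i$ there is a witness rank $\alpha_i$ and a canonical compact $S_{\alpha_i}(w_i)$ such that no lift of any $S^*_{\alpha_i}(w_i)$ has its point over $w_i$ inside $V$.

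Next, take $D_y$ to be a subsequence of the $w_i$, and take $\alpha$ to be the limit (or successor) of the $\alpha_i$. Assemble these into a canonical $S_\alpha(y)$ using the limit step of the construction. Since $U\in\eta_y$, there is a lift $K$ of some $S^*_\alpha(y)$ whose point $x$ over $y$ lies in $U\cap f^{-1}(y)\subset V$. Because $V$ is open, all but finitely many of the sub-blocks $K_{i_k}$, which converge to $x$, lie inside $V$. Each such block is a lift of $S^*_{\alpha_{i_k}}(w_{i_k})$ with its top point in $V$. This contradicts the choice of $w_{i_k}$.

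\emph{Main obstacle.} The hard step is matching the witness compacts. The bad compacts $S_{\alpha_i}(w_i)$ must fit into the canonical construction, with disjoint neighborhoods $O(w_i)$ and the diameter conditions. I would first pass to a subsequence to make the $O(w_i)$ disjoint. I would also need the definition of $\eta_w$ to quantify over all canonical compacts at $w$, so that the sub-blocks of $K$ are legitimate witnesses. If these details resist, I would fall back on citing the author's earlier theorem that every $H$-covering map is stable, which the Introduction explicitly records, and then the corollary follows immediately from the Main Theorem.
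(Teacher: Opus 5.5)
Your main route is the paper's. The direction stable $\Rightarrow$ $H$-covering is the Main Theorem, and the converse is imported from earlier work; the paper cites \cite[Proposition~1]{Ost2}, which gives that $H$-covering maps are transquotient, hence stable.

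Your supplementary direct argument is a genuinely different route, since the paper never writes down a stable family. It is sound and gives a self-contained proof with an explicit $\eta_y$. It also shows why all ranks $\alpha<\omega_1$ are needed: the countably many witness ranks $\alpha_i$ have supremum still below $\omega_1$.

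However, passing to a subsequence of the $w_i$ does not by itself resolve the obstacle you flag. A bad witness $S_{\alpha_i}(w_i)$ may be large, may meet the other witnesses, or may contain $y$. The missing step is a tail argument. Write $S_{\alpha_i}(w_i)=\{w_i\}\cup\bigcup_j S_{\gamma_j}(z_j)$ with $z_j\in D_{w_i}$ (for $\alpha_i=1$ the blocks are just the points $z_j$). Then $\{w_i\}\cup\bigcup_{j\ge N}S_{\gamma_j}(z_j)$ has three properties:
\begin{enumerate}
\item it is again a canonical compact of rank $\alpha_i$;
\item it is again bad, because each of its starred versions is a starred version of the original;
\item it lies in any prescribed ball about $w_i$ once $N$ is large.
\end{enumerate}
So you can place these shrunken bad tails in pairwise disjoint balls of radius $<1/(2i)$ that avoid $y$.

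You also need to pass to a subsequence on which the $\alpha_i$ are either constant or strictly increasing. In the constant case use the successor step; in the increasing case use the limit step with $\alpha=\sup_i\alpha_i$. Finally, note that you are using the reading, implicit in the proof of the Main Theorem, that $D_y$ may be any sequence converging to $y$. With these additions your contradiction goes through as written.
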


From  \cite[Proposition~1]{Ost2}  it follows that   every $H$-covering map is transquotient, and hence stable. 
The main theorem shows the converse implication, hence the two notions coincide.

Thus stable maps admit a concrete characterization via countable compact sets.

For triquotient maps no such countable characterization is known.
A broad investigation of completeness preservation by triquotient maps is given in [1].

Since every continuous map defined on a compact space is closed, and hence
stable, we obtain from the properties of stable maps the following simple
corollary.

\begin{corollary}
Every continuous map defined on a compact space is an $H$-covering map.
\end{corollary}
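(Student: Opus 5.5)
The statement is the last corollary: every continuous map defined on a compact space is an $H$-covering map. I will reduce it to the Main Theorem, which says every stable map is $H$-covering. So it suffices to show that a continuous onto map $f:X\to Y$ with $X$ compact metric is stable. Two standard facts give this. First, a continuous map from a compact space to a Hausdorff (here metric) space is closed. Second, closed maps are stable, and I will check this second fact directly against the definition in Section~1.

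\emph{Step 1: $f$ is closed.} If $F\subset X$ is closed, then $F$ is compact. Hence $f(F)$ is compact in the metric space $Y$, and therefore closed.

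\emph{Step 2: a stable family.} For each $y\in Y$ I take
\[
\eta_y=\{U\subset X : U \text{ open and } U\supset f^{-1}(y)\}.
\]
This family is nonempty because $X\in\eta_y$. Each $U\in\eta_y$ meets $f^{-1}(y)$, since $f$ is onto and so $f^{-1}(y)\ne\emptyset$.

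For the stability condition, let $U\in\eta_y$ and let $V$ be open with $V\supset U\cap f^{-1}(y)=f^{-1}(y)$. Put
\[
O(y)=Y\setminus f(X\setminus V).
\]
By Step 1 this set is open, and it contains $y$ because $f^{-1}(y)\subset V$. For every $w\in O(y)$ we get $f^{-1}(w)\subset V$, so $V\in\eta_w$. Thus $f$ is stable.

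\emph{Step 3: conclusion.} The Main Theorem now gives that $f$ is $H$-covering.

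One point needs care. Section~2 assumes that $Y$ has no isolated points, because the canonical compacta $S_\alpha(y)$ need a sequence $D_y$ converging to $y$. At an isolated point $y$ no such $S_\alpha(y)$ exists, so the $H_\alpha$-covering condition is vacuous there. I therefore expect the compactness of $X$ to cause no trouble.

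The only real obstacle is the reliance on the Main Theorem itself. In its inductive step, the compacta $K_i$ must be chosen with $\operatorname{diam}(K_{i_k})\to 0$, and they must accumulate at a point of $Z_\alpha(y)$.

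In the compact setting I would cross-check the conclusion directly. Given $S_\alpha(y)$, I would use compactness of $X$ to choose, inductively on rank, points $x$ of fibres that are limits of the lifted pieces. Then, for a subsequence of the lifted pieces $K_i$, I would pass to a subsequence converging in the hyperspace $2^X$ to a subset of $f^{-1}(y)$. Finally, I would thin the subsequence so that the pieces converge to a single point $x$. If needed, I would do this by first fixing a limit point $x$ and working only inside small neighbourhoods of $x$, as in Remark~1.2(III).

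Closedness of $f$ makes each restriction to $\{x\}\cup\bigcup_k K_{i_k}$ a closed continuous bijection onto its image. It is therefore a homeomorphism onto some $S^*_\alpha(y)$, which confirms the corollary independently of the general argument.
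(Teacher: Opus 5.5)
Your proof is correct and follows the paper's route exactly: compactness of $X$ makes $f$ closed, closed maps are stable, and the Main Theorem then gives that $f$ is $H$-covering; your choice of $\eta_y$ as the open sets containing $f^{-1}(y)$, with $O(y)=Y\setminus f(X\setminus V)$, checks directly the closed-implies-stable step that the paper only asserts. The closing cross-check is unnecessary, but note that thinning the index sequence cannot collapse a nondegenerate hyperspace limit to a point: instead, trim each lifted piece $K_i$ by dropping finitely many of its top-level pieces so that it lies within $1/i$ of its point over $y_i$, and then use compactness of $X$ to make these points converge.
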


\section{Concluding Remarks}

The notion of an $H$-covering map provides a general framework that
brings together several different directions in the study of completeness
preservation. As an illustration, consider the class of $O\cap F$-measurable
maps, that is, maps whose preimages of open sets are intersections of one
open and one closed set.

Such maps are determined by their behavior on canonical countable
compact sets $S_2(y)$ \cite[Corollary~2]{Ost2}, just as $H_2$-covering maps are.

By [7, Theorem 1], every \(O\cap F\)-measurable map is finitely continuous.
Hence, if \(f\) is a bijection and both \(f\) and \(f^{-1}\) are \(O\cap F\)-measurable,
then \(f\) is a finitely homeomorphism. By the Lavrentiev–Luzin theorem,
if \(X\) is completely metrizable, then so is \(Y\).

However, if we replace the assumption that \(f\) is a bijection and both
\(f\) and \(f^{-1}\) are \(O\cap F\)-measurable by the weaker assumption
that \(f\) is \(n\)-to-one (for some \(n\in\{2,3,\dots\}\)) and the
\(O\cap F\)-measurable map \(f\) sends open sets into \(O\cap F\)-sets,
then it is unknown whether \(f\) is a finitely $H$-covering map. This
difficulty becomes even more pronounced when considering compositions
of such maps.
This
leads naturally to the following question:

\medskip

\noindent\textbf{Question.}
Does every continuous $H$-covering map $f\colon X\to Y$, where
$X,Y\subset \mathbf{I}$, extend to such a map on a completely
metrizable subset of $\mathbf{I}$?


\begin{thebibliography}{99}

\bibitem{N} S. Nedev, J. Pelant, V. Valov,
A non-separable Christensen's theorem and set tri-quotient maps
Topol. Appl. 156 (2009), 1234--1240.


\bibitem{Mich} Michael E.,  Partition-complete spaces and their preservation by tri-quotient  and related maps,
Topol.  Appl. 73 (1996) 121--131.


 \bibitem{Pr}  Ostrovsky A.,
Preservation of complete metrizability by covering maps.
Topol. Appl.
 201 (2016),  269--273.




\bibitem{Ost2}
Ostrovsky A.,
On the widest class of completeness-preserving covering maps,
Topol. Appl. 308 (2022), 107998.


\bibitem{Ost}
Ostrovsky A.,
Stable maps of Borel set,
Topol. Appl. 326 (2023), 108413.

\bibitem{Ostro} Ostrovsky A.,
Generalization of sequences and convergence in metric spaces,
Topol. Appl.,
 171  (2014) 63-70.






\bibitem{Ost9}
Ostrovsky A.,
Finitely continuous functions,
Topol. Appl. 261 (2019), 46--50.



\end{thebibliography}
\end{document}